\newcommand{\R}{\mathds{R}}
\newcommand{\CP}{\mathds{C}\mathrm{P}}
\newcommand{\C}{\mathds{C}}
\newtheorem{theor}{Theorem}
\newtheorem{lem}[theor]{Lemma}
\newtheorem{cor}[theor]{Corollary}
\newtheorem{ex}{Example}
\newtheorem{remark}[theor]{Remark}
\begin{document}
\title[TYZ expansion of locally Hermitian symmetric spaces]{On the coefficients of  TYZ expansion of locally Hermitian symmetric spaces}
\author[A. Loi]{Andrea Loi}
\address{Dipartimento di Matematica e Informatica, Universit\`{a} di Cagliari,
Via Ospedale 72, 09124 Cagliari, Italy}
\email{loi@unica.it}
\author[M. Zedda]{Michela Zedda}
\address{Dipartimento di Matematica ``G. Peano", Universit\`{a} di Torino}
\email{michela.zedda@gmail.com}

\thanks{
The first author was supported  by Prin 2010/11 -- Variet\`a reali e complesse: geometria, topologia e analisi armonica -- Italy;
the second author was supported by the project FIRB ``Geometria Differenziale e teoria geometrica delle funzioni''. Both the authors were supported by  INdAM-GNSAGA - Gruppo Nazionale per le Strutture Algebriche, Geometriche e le loro Applicazioni.
}
\date{}
\subjclass[2000]{53C55; 58C25;  58F06}
\keywords{K\"ahler manifolds; quantization; TYZ asymptotic expansion; Hermitian symmetric spaces; scalar flat K\"ahler metrics}

\begin{abstract}
In this paper we address the problem of studying those 
K\"ahler manifolds whose first two coefficients of the associated TYZ expansion vanish and we prove that 
for a locally Hermitian symmetric space this happens only in the flat  case. We also prove that there exist nonflat 
locally Hermitian  symmetric spaces where all the odd coefficients vanish.
\end{abstract}

\maketitle

\section{introduction and statement of the main results}

Let $M$ be a $n$-dimensional complex manifold endowed with a K\"ahler metric $g$. Assume that there exists a 
holomorphic line bundle  $L$ over $M$ such that   $c_1(L)=[ \omega ]$,
where $\omega$ is the K\"{a}hler form associated to $g$ and $c_1(L)$
denotes the first Chern class of $L$
(such an $L$ exists if and only if  $\omega$ is an integral form).
Let $m\geq 1$ be a
non-negative integer and let $h_m$ be an  Hermitian metric on
$L^m=L^{\otimes m}$ such that its Ricci curvature ${\rm Ric}
(h_m)=m\omega$. Here ${\rm Ric} (h_m)$ is the two--form on $M$ whose
local expression is given by
\begin{equation}\label{rich}
{\rm Ric} (h_m)=-\frac{i}{2}
\partial\bar\partial\log h_m(\sigma (x), \sigma (x)),
\end{equation}
for a trivializing holomorphic section $\sigma :U\rightarrow
L^m\setminus\{0\}$. In the quantum mechanics terminology $L^m$ is
called the {\em quantum line bundle},
 the pair $(L^m, h_m)$ is called a {\em geometric
quantization} of the K\"{a}hler manifold $(M, m\omega)$ and $\hbar =
m^{-1}$ plays the role of Planck's constant  (see e.g.
\cite{arlquant}). Consider the separable complex Hilbert space
$\mathcal{H}_m$ consisting of global holomorphic sections  $s$ of $L^m$
such that
$$\langle s, s\rangle_m=
\int_Mh_m(s(x), s(x))\frac{\omega^n}{n!}<\infty .$$
Define:
\begin{equation}\label{Tmo}
\epsilon_{m g} (x) =\sum_{j=0}^{d_m}h_m(s_j(x), s_j(x)),
\end{equation}
where $s_j$, $j=0, \dots , d_m$ ($\dim \mathcal{H}_m=d_m+1\leq\infty$) is an orthonormal basis of $\mathcal{H}_m$.

As suggested by the notation  this function depends only on the metric $m g$ and not on   the orthonormal basis chosen or on the Hermitian metric $h_m$. Obviously if $M$ is compact $\mathcal{H}_m=H^0(L^m)$, where 
$H^0(L^m)$ is the (finite dimensional) space of global holomorphic sections of $L^{m}$.

In the literature the function $\epsilon_{m g}$ was first introduced under the name of $\eta$-{\em function} by J. Rawnsley in \cite{rawnsley}, later renamed as $\theta$-{\em function} in \cite{cgr1} followed by the {\em distortion function } of G. R. Kempf \cite{ke} and S. Ji \cite{ji}, for the special case of Abelian varieties and of S. Zhang
\cite{zha} for complex projective varieties.

In \cite{ze} Zelditch proved that if in the above setting $M$ is compact,  there exists
a complete asymptotic expansion in the $C^\infty$ category  of Kempf's distortion function:
\begin{equation}\label{TYZ}
\epsilon_{m g}(x)\sim \sum_{j=0}^\infty a_j(x)m^{n-j},
\end{equation}
where $a_0(x)=1$ and $a_j(x)$, $j=1,\dots$ are smooth functions on $M$.
This means that, for any nonnegative integers $r,k$ the following estimate holds:
\begin{equation}\label{rest}
||\epsilon_{m g}(x)-
\sum_{j=0}^{k}a_j(x)m^{n-j}||_{C^r}\leq C_{k, r}m^{n-k-1},
\end{equation}
where $C_{k, r}$ are constant depending on $k, r$ and on the
K\"{a}hler form $\omega$ and $ || \cdot ||_{C^r}$ denotes  the $C^r$
norm in local coordinates.
The expansion \eqref{TYZ} is called {\em Tian--Yau--Zelditch expansion} (TYZ in the sequel). 
Later on,  Z. Lu \cite{lu},  by means of  Tian's peak section method,
 proved  that each of the coefficients $a_j(x)$ 
is a polynomial
of the curvature and its
covariant derivatives at $x$ of the metric $g$ which can be found
 by finitely many algebraic operations.
In particular, he computed the first three coeffcients which are given  by:
\begin{equation}\label{coefflu}
\left\{\begin{array}
{l}
a_1(x)=\frac{1}{2}\rho\\
a_2(x)=\frac{1}{3}\Delta\rho
+\frac{1}{24}(|R|^2-4|{\rm Ric} |^2+3\rho ^2)\\
a_3(x)=\frac{1}{8}\Delta\Delta\rho +
\frac{1}{24}{\rm div}{\rm div} (R, {\rm Ric})-
\frac{1}{6}{\rm div}{\rm div} (\rho{\rm Ric})+\\
\qquad\quad\ \ +\frac{1}{48}\Delta (|R|^2-4|{\rm Ric} |^2+8\rho ^2)+
\frac{1}{48}\rho(\rho ^2- 4|{\rm Ric} |^2+ |R|^2)+\\
\qquad\quad\ \ +\frac{1}{24}(\sigma_3 ({\rm Ric})- {\rm Ric} (R, R)-R({\rm Ric} ,{\rm Ric})),
\end{array}\right.
\end{equation}
where $\rho$, ${\rm Ric}$, $R$, are, respectively,  the scalar curvature, the Ricci tensor and the Riemann curvature tensor
of  $(M, g)$. The reader is also referred to  
\cite{loianal} and \cite{asymsmooth} (see also formula  (\ref{recursive}) below)  for a  recursive formula  of the coefficients $a_j$'s  and  an alternative  computations of  $a_j$ for $j\leq 3$ using
Calabi's diastasis function (see also  \cite{xu1}  for a graph-theoretic
interpretation of this recursive formula).

When $M$ is noncompact, there is not a general theorem which assures the existence of an asymptotic expansion \eqref{TYZ}. Observe that in this case we say that an asymptotic expansion \eqref{TYZ} exists if \eqref{rest} holds for any compact subset $H\subset M$. M. Engli\v{s} in  \cite{englis2} showed that a TYZ asymptotic expansion exists in the case of strongly pseudoconvex bounded domain of $\C^n$ with real analytic boundary, and proved that the first three coefficients are the same as those computed by Lu for compact manifolds \eqref{coefflu}.  
The reader is also referred  to \cite{MaMarinescu} (see also \cite{commtodor} and  \cite{LoiZeddaTaub}) for the description of some curvature conditions
which assures the existence of a TYZ expansion in the noncompact case. 
Due to Donaldson's work (cfr. \cite{donaldson},
\cite{do2} and \cite{arezzoloi}) in the compact case and respectively to the theory of
quantization  in the noncompact case (see, e.g. \cite{Ber1, cgr1, cgr2, cgr3, cgr4, regcov, berezin}),
 it is  natural to study metrics with the coefficients  of  the  TYZ expansion being prescribed.
In this regards  Z. Lu and G. Tian   \cite{LuTian}  (see also \cite{engliszhang} and \cite{LoiArezzo} for the symmetric and homogenous case
respectively) prove that the PDEs $a_j=f$ ($j\geq 2$ and $f$ smooth function on $M$) are elliptic and that if the logterm
of the Bergman and  Szeg\"{o} kernel  of the unit disk bundle over $M$ vanishes then  $a_k=0$, for  $k>n$ ($n$ being the complex dimension of $M$).  
The study of these PDEs makes sense despite to the existence of  a TYZ expansion
and so given any K\"ahler manifold $(M, g)$ it makes sense to call the   $a_j$'s  the {\em coefficients associated to metric $g$}.
In the noncompact case in \cite{LoiZeddaTaub}
one can find a characterization of the flat metric as a Taub-Nut metric with $a_3=0$ while  Feng and Tu \cite{fengtu}   solve a conjecture formulated by the second authors  in \cite{zedda} by showing that  the complex hyperbolic space is the only Cartan-Hartogs domain where the coefficient  $a_2$ is constant.

In this paper we address the problem of studying those K\"ahler manifolds $(M, g)$ such that 
 the first two coeffcients   $a_1$ and $a_2$ associated to $g$   vanish (obviously this happens when the metric $g$ is flat, since in this case $a_j=0$, for all $j\geq 2$).
 By the first of  (\ref{coefflu}) a metric  with   $a_1=0$ is a   scalar flat K\"ahler  (SFK) metric.
 These metrics have been extensively analysed by several authors and we refer the reader to \cite{lebrunSFK}  for the  main results about the existence of SFK metrics in the compact case (see also  Example \ref{exlebrun} below)  and to \cite{abreuSFK} and reference therein for explicit construction of SFK in the noncompact settings.
 If  a K\"ahler  manifold is one dimensional then the condition  $a_1=0$  is clearly equivalent to flatness while by the second of (\ref{coefflu}) the condition $a_2=0$ alone yields $\Delta\rho=0$ and, so in the compact one-dimensional  case, the scalar curvature is forced to be constant. Observe also  that if the metric is not only  SFK  but also  Einstein (and so Ricci flat)
then, the   condition $a_2=0$  implies $|R|^2\equiv0$, and so, also in this case,  $g$ is flat. 
On the other hand the following example shows that there exists K\"ahler metrics
with $a_1=a_2=0$ which are not flat.
 
\begin{ex}\rm\label{exsimanca}
Let $M$ be the blown-up of $\mathds{C}^2$ at the origin and denote by $E$ the exceptional divisor.
Let $(z_1, z_2)$ be the standard coordinates of $\mathds{C}^2$. In \cite{simanca}  Simanca constructs a SFK complete (not Ricci flat) metric  $g$ on $M$, 
whose K\"ahler potential  on $M\setminus E=\mathds{C}^2\setminus\{0\}$ can be written as 
$$
\Phi(z)=|z|^2+\log |z|^2 , \ |z|^2=|z_1|^2+|z_2|^2.
$$
A straightforward computation gives:
$$|R|^2=\frac{8}{(1+|z|^2)^4}, \ |{\rm Ric}|^2=\frac{2}{(1+|z|^2)^4}.$$
Thus the coefficients $a_1$ and $a_2$ associated to $g$ both vanishes.
 \end{ex}

The following example deals with the compact case of dimension $2$.

\begin{ex}\rm\label{exlebrun}
We start by recalling that the Euler characteristic of a real $4$-dimensional  compact manifold $S$ is given by:
$$\chi (S)=\int_S [|R|^2-4|{\rm Ric|^2}+\rho^2]\  dS.$$
Therefore  it follows by (\ref{coefflu}) that  a necessary topological  condition for the 
existence of  K\"ahler  metric on a compact complex  surface with  $a_1=a_2=0$ 
is the vanishing of its Euler characteristic.
The easiest example of compact  surface which admits a  SFK 
is the product  $M=\Sigma_g\times \CP^1$ of a Riemann surface $\Sigma_g$ of genus $g\geq 2$ with a metric  of constant scalar curvature $-1$ and the one dimensional complex projective space with the metric of  constant scalar curvature $1$.
In this case the coefficient $a_2$ associated to this metric is different from zero. The Euler characteristic of $M$ is equal to $4-4g$.
LeBrun \cite{lebrunSFK} proved that if one takes the blown up  of $M$ at an even number of points (in a suitable position) one gets
a compact complex surface which still admits a SFK metric. 
Therefore by blowning  up  $M$ at $4g-4$ points one gets a compact complex surface with vanishing Euler characteristic
and which admits a SFK metric.  Unfortunately we were not able to compute $a_2$.
\end{ex}
 
By Example \ref{exsimanca} it is natural to look for  extra conditions, which  together with the vanishing  of $a_1$ e $a_2$
would imply the flatness of the  K\"ahler manifold involved. The following theorem, which represents the  first main result of this paper, shows that this is the case 
for  locally Hermitian symmetric space.

 \begin{theor}\label{main}
Let $(M, g)$ be a  locally Hermitian symmetric space of complex dimension $n$. If  the coefficients $a_1$ and $a_2$ vanish  then $g$ is flat.
\end{theor}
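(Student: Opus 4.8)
The plan is to reduce the problem to a finite list of irreducible Hermitian symmetric spaces and then exploit the rigidity of their curvature invariants. Since a locally Hermitian symmetric space has the same curvature tensor (pointwise, up to local isometry) as its universal cover, which is a globally symmetric space, and since all the quantities $\rho$, $|\mathrm{Ric}|^2$, $|R|^2$ appearing in $a_1$ and $a_2$ are pointwise algebraic invariants of the curvature tensor, it suffices to prove the statement for simply connected Hermitian symmetric spaces. By the de~Rham decomposition such a space splits as a product $M_0 \times M_+ \times M_-$ of a flat factor, a compact-type factor, and a noncompact-type factor; and each of $M_\pm$ decomposes further into irreducible pieces. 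The first step is therefore to record how $\rho$, $|\mathrm{Ric}|^2$ and $|R|^2$ behave under Riemannian products (they add), so that $a_1 = \tfrac12\rho$ and $a_2 = \tfrac{1}{24}(|R|^2 - 4|\mathrm{Ric}|^2 + 3\rho^2)$ (using $\Delta\rho = 0$, which holds since $\rho$ is constant on a symmetric space) can be assembled from the contributions of the irreducible factors.

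The second and main step is to observe that on an irreducible Hermitian symmetric space the metric is Einstein, so $\mathrm{Ric} = \lambda g$ for a constant $\lambda$ (positive in the compact case, negative in the noncompact case), whence $\rho = n\lambda$ and $|\mathrm{Ric}|^2 = n\lambda^2 = \rho^2/n$. Thus on each irreducible factor $a_1$ is a nonzero multiple of $\lambda$ and cannot vanish unless the factor is flat; and since the flat, compact-type, and noncompact-type contributions to $\rho$ have, respectively, vanishing, strictly positive, and strictly negative sign and cannot cancel across de~Rham factors (the splitting is orthogonal and $\rho$ is a sum), $a_1 = 0$ already forces all irreducible factors to be absent, i.e. $M$ is flat. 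In other words the condition $a_2 = 0$ is not even needed once one uses the Einstein property together with irreducibility and the de~Rham splitting.

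I expect the above argument to be essentially complete, but there are two points that need care, and one of them I anticipate being the real obstacle. First, one must justify that "locally Hermitian symmetric" lets us pass to the simply connected cover at the level of curvature invariants; this is standard (local isometries preserve $R$ and all contractions) but should be stated. Second — and this is the delicate point — one should double-check whether the paper's intended argument really wants to use $a_2 = 0$ as well, for instance to handle a subtlety in low dimension or in the case where the author does \emph{not} want to invoke the Einstein property of irreducible Hermitian symmetric spaces directly but instead works factor-by-factor with explicitly tabulated curvature norms. If one insists on an invariant-theoretic proof that does not quote the Einstein condition, then the hard part becomes showing that the quadratic curvature expression $|R|^2 - 4|\mathrm{Ric}|^2 + 3\rho^2$ is, on a Hermitian symmetric space, controlled by $\rho^2$ in such a way that $a_1 = a_2 = 0$ still forces $R = 0$; this amounts to a Schur-type inequality $|R|^2 - 4|\mathrm{Ric}|^2 + 3\rho^2 \geq c\,\rho^2$ with $c > 0$ for the relevant holonomy, or equivalently to examining the sign of $a_2$ on each irreducible type from the classification. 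I would carry out the clean Einstein-based argument first and then, if the paper's structure demands it, supply the case-by-case verification as a remark.
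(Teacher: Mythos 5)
Your main argument contains a genuine error: the claim that the contributions of the compact-type and noncompact-type factors to $\rho$ ``cannot cancel across de Rham factors'' is false. The scalar curvature of a product is the \emph{sum} of the scalar curvatures of the factors, and a sum of a strictly positive and a strictly negative constant can of course vanish. The paper's own Example \ref{exlebrun} exhibits exactly this: $\Sigma_g\times\CP^1$ with the product of the hyperbolic metric of scalar curvature $-1$ and the Fubini--Study metric of scalar curvature $+1$ is a nonflat LHSS with $a_1=0$. So $a_1=0$ alone does \emph{not} force flatness, and the hypothesis $a_2=0$ is genuinely needed; the paper points out explicitly that no two of the three assumptions suffice.

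Your fallback sketch points in roughly the right direction but does not close the gap, and it targets the wrong quantity. Because of the cross term in the product formula $c_2=a_2+b_2+a_1b_1$ (Lemma \ref{c2b2a2} of the paper, which does follow from the additivity of $\Delta\rho$, $|R|^2$, $|{\rm Ric}|^2$, $\rho$ that you state), the condition $c_1=0$ converts $c_2$ into $\sum_u \lambda_u^{-2}\bigl(a_{2,u}-\tfrac12 a_{1,u}^2\bigr)$ over the irreducible factors; so what must be controlled is the sign of $a_2-\tfrac12 a_1^2$ on each irreducible factor, not the sign of $a_2$ or a bound of $|R|^2-4|{\rm Ric}|^2+3\rho^2$ against $\rho^2$ alone. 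The paper proves $a_2-\tfrac12 a_1^2<0$ for every irreducible bounded symmetric domain with the normalized Bergman metric $\tfrac1\gamma g_B$ (Lemma \ref{coeffprop}), using Engli\v{s}'s explicit formulas for $a_1,a_2$ in terms of the rank $r$ and the invariants $(a,b)$ --- this is a concrete computation, not a consequence of the Einstein property, and it is the step your proposal leaves unproved. Together with the duality relations $a_1^*=-a_1$, $a_2^*=a_2$ for the compact duals and the homogeneity of $a_2-\tfrac12a_1^2$ under rescaling $g\mapsto\lambda g$, this gives $c_2<0$ whenever a nonflat factor is present, which is the actual proof. Your observation that local isometry invariance reduces everything to the simply connected cover and its de Rham factors is correct and is also how the paper begins.
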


Throughout this paper  by a  {\em locally Hermitian symmetric space} {\em (LHSS} in the sequel)  we shall  mean a 
K\"ahler manifold $(M, g)$ whose universal cover (with the  K\"ahler metric induced by $g$) is an Hermitian symmetric space. It follows by the classification of  Hermitian symmetric spaces that the universal cover of a   LHSS  is the K\"ahler product of irreducible Hermitian symmetric spaces of compact type, irreducible  Hermitian symmetric spaces of noncompact type (namely bounded symmetric domains with a multiple of the Bergman metric) and the flat Euclidean space. Notice that if $(M, \omega)$ is a LHSS then also $(M, \lambda\omega)$ is a LHSS for each real number $\lambda >0$.

We  point out that one can  construct examples (either compact or noncompact) of  LHSS which are SFK and not flat
(take for example the product $\Sigma_g\times \CP^1$ as in Example \ref{exlebrun})
and of  LHSS with $a_2=0$ which are not SFK (see  Example \ref{exc2=0} below).
Thus, by combining these observations with Example \ref{exsimanca}, we deduce that  two of the three assumptions  ($a_1=0$,  $a_2=0$ and LHSS) in  Theorem \ref{main} are not sufficient to deduce the flatness of the metric involved.

The proof of the Theorem \ref{main} is based on the
the explicit expressions of the coefficients $a_1$ and $a_2$ associated to an  irreducible  bounded symmetric domain given by 
M. Engli\v{s} in \cite{me2} (see formula (\ref{b12}) below) which yields  $a_2-\frac12a_1^2<0$ (see Lemma \ref{coeffprop}
below).

It could be interesting to extend our result to  locally homogeneous K\"ahler spaces.
Unfortunately we were not able to attack this case due to  the lack of knowledge of the coefficient $a_1$ and $a_2$.
Nevertheless we believe our theorem holds true also in this case.

From Theorem \ref{main} it is natural to ask what happens when one imposes conditions on the other coefficients
$a_j$, with $j\geq 3$. In the following theorem which represents our second and last result, we show that there exist  nonflat LHSS where all the odd coefficients vanish.

\begin{theor}\label{main2}
Let $(\Omega,g)$ be a bounded symmetric domain,    $g=\frac1\gamma g_B$, where $g_B$ is its Bergman metric and $\gamma$ its genus, and let $(\Omega^*,g^*)$ be its compact dual. If $a_j$ (resp. $a_j^*$) are the coefficients of $(\Omega,g)$ (resp. $(\Omega^*,g^*))$ one has:
\begin{equation}\label{ajaj*}
a_j=(-1)^ja_j^*.
\end{equation}
Therefore  the odd coefficients  associated to  the metric  $g\oplus g^*$ on $\Omega\times\Omega^*$ all  vanish.
\end{theor}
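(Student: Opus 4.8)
The plan is to prove first the numerical identity \eqref{ajaj*} and then to deduce from it, via the multiplicativity of Kempf's distortion function under K\"ahler products, that all odd coefficients of $g\oplus g^*$ vanish; throughout write $n=\dim_\C\Omega=\dim_\C\Omega^*$ (for instance $\Omega=\CH^n$, $\Omega^*=\CP^n$). The key reduction for \eqref{ajaj*} is the following. Since $(\Omega,g)$ and $(\Omega^*,g^*)$ are homogeneous the coefficients $a_j,a_j^*$ are constants, so \eqref{ajaj*} is an equality of real numbers. By Lu's theorem recalled after \eqref{coefflu}, $a_j$ equals one and the same universal polynomial $P_j$ in the curvature tensor $R$, the Ricci and scalar curvatures and their covariant derivatives, evaluated for the metric at hand. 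On a (locally) Hermitian symmetric space $\nabla R\equiv0$, so only the part of $P_j$ free of covariant derivatives survives; moreover the homogeneity $a_j(\lambda g)=\lambda^{-j}a_j(g)$ (immediate from $\epsilon_{m(\lambda g)}=\lambda^{-n}\epsilon_{(\lambda m)g}$ together with $R_{i\bar jk\bar l}(\lambda g)=\lambda R_{i\bar jk\bar l}(g)$) forces that surviving part to be a fixed linear combination of complete contractions of \emph{exactly} $j$ copies of $R$, each contraction using $2j$ factors of the inverse metric. The recursive formula \eqref{recursive}, which rebuilds $a_j$ from the Taylor coefficients of Calabi's diastasis at a point, gives the same conclusion and, if wanted, the explicit combination.

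Next I would use the duality between $\Omega$ and $\Omega^*$. Realize $\Omega\subset\C^n$ in its Harish--Chandra model and $\Omega^*$ via the Borel embedding, so that the two share the base point $0$; with $g^*$ the compact-dual metric of $g=\tfrac1\gamma g_B$ one has $g^*_0=g_0$, while the classical relation between the Cartan decompositions $\mathfrak g_0=\mathfrak k\oplus\mathfrak p$ of $\Omega$ and $\mathfrak g_0^*=\mathfrak k\oplus i\mathfrak p$ of $\Omega^*$ gives that the two curvature tensors are negatives of one another at $0$ under the identification of the tangent spaces; by parallelism $R^{\Omega^*}=-R^\Omega$ everywhere. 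Consequently each complete contraction of $j$ copies of the curvature acquires the factor $(-1)^j$ on passing from $\Omega$ to $\Omega^*$, and the reduction above yields $a_j=(-1)^ja_j^*$, i.e.\ \eqref{ajaj*}. (Equivalently, the globally defined diastasis of $\Omega^*$ is obtained from that of $\Omega$ by negating the mixed terms of the potential, and \eqref{ajaj*} drops out of \eqref{recursive}; for irreducible $\Omega$ one may also simply compare Engli\v{s}'s explicit formula \cite{me2} with its compact-dual analogue.)

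For the product statement, I would equip $\Omega$ with the trivial line bundle $L$ (it is contractible) carrying a Hermitian metric of Ricci form $\omega$, $\Omega^*$ with $L^*$ carrying one of Ricci form $\omega^*$, and $\Omega\times\Omega^*$ with $L\boxtimes L^*$ and the product Hermitian metric, whose Ricci form is $\omega\oplus\omega^*$. Since $\tfrac{(\omega\oplus\omega^*)^{2n}}{(2n)!}=\tfrac{\omega^n}{n!}\wedge\tfrac{(\omega^*)^n}{n!}$, Fubini's theorem identifies the Hilbert space $\mathcal H_m$ of $\Omega\times\Omega^*$ with the tensor product of those of $\Omega$ and $\Omega^*$, whence
\begin{equation}\label{prodeps}
\epsilon_{m(g\oplus g^*)}(x,y)=\epsilon_{mg}(x)\,\epsilon^*_{mg^*}(y).
\end{equation}
On these homogeneous spaces $\epsilon_{mg}$ and $\epsilon^*_{mg^*}$ are genuine polynomials in $m$ of degree $n$, say $Q(m)$ and $Q^*(m)$, and \eqref{ajaj*} says precisely $Q^*(m)=(-1)^nQ(-m)$; thus \eqref{prodeps} reads $\epsilon_{m(g\oplus g^*)}=(-1)^nQ(m)Q(-m)$, which is $(-1)^n$ times an \emph{even} polynomial in $m$. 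Hence in $\sum_j a_j^{\Omega\times\Omega^*}m^{2n-j}$ only exponents $2n-j$ of the parity of $2n$ survive, that is $a_j^{\Omega\times\Omega^*}=0$ for every odd $j$. (Equivalently, and without appealing to polynomiality or to the existence of a TYZ expansion in the noncompact case, $a_j^{\Omega\times\Omega^*}=\sum_{p+q=j}a_pa_q^*=\sum_{p+q=j}(-1)^qa_pa_q$, whose terms for $j$ odd are antisymmetric under $p\leftrightarrow q$ and so cancel.)

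The core of the argument is \eqref{ajaj*}, and within it the delicate point is making the curvature duality $R^{\Omega^*}=-R^\Omega$ precise for the correct matching of the normalizations $g=\tfrac1\gamma g_B\leftrightarrow g^*$, together with the bookkeeping showing that on a symmetric space $a_j$ collapses to a contraction of exactly $j$ curvature tensors; the multiplicativity \eqref{prodeps} and the ensuing parity argument are then routine.
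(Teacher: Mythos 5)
Your proposal is correct, and the two halves of it relate to the paper differently. The product half is essentially the paper's own argument: the paper proves the multiplicativity $\epsilon_{1,2}=\epsilon_1\epsilon_2$ (Lemma \ref{prodotto}) and deduces $c_j=\sum_{\alpha+\beta=j}a_\alpha a^*_\beta$ (Corollary \ref{corprodotto}), after which the odd coefficients cancel exactly by the antisymmetry under $\alpha\leftrightarrow\beta$ that you point out. The duality identity \eqref{ajaj*}, however, you prove by a genuinely different route. The paper works with the recursive formula \eqref{recursive}: it observes that $\det g$ is $\gamma^{-d}e^{D_0}$ by the Einstein equation, that the dual satisfies $\det g^*(z,\bar z)=\det g(z,-\bar z)$, that only the terms with $K=J$ survive at the origin (so each surviving $2r$-th derivative picks up $(-1)^r$), whence $c_r(0)=(-1)^r c^*_r(0)$, and then runs an induction on $k$ through \eqref{recursive}. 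You instead invoke Lu's structure theorem: on a space with $\nabla R\equiv 0$ the universal polynomial for $a_j$ collapses to complete contractions of curvature alone, the scaling law $a_j(\lambda g)=\lambda^{-j}a_j(g)$ isolates the contractions of exactly $j$ copies of $R$, and the classical duality $\mathfrak{p}\leftrightarrow i\mathfrak{p}$ gives $R^{\Omega^*}=-R^{\Omega}$ and hence the factor $(-1)^j$. Your argument is more conceptual and explains \emph{why} the sign is $(-1)^j$ (it is literally the sign of the curvature), whereas the paper's induction is self-contained given \cite{asymsmooth} and needs no structure theorem for the $a_j$'s. The one point you should pin down in your route is that the coefficients of the (exact, polynomial) Kempf function of the noncompact domain are in fact computed by Lu's universal curvature polynomials for \emph{all} $j$, not just $j\le 3$; the paper avoids this issue by taking the recursive formula \eqref{recursive} as the computational definition of the $a_j$'s on both $\Omega$ and $\Omega^*$. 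Your weight-counting step is also fine as stated, but note it tacitly uses that complete contractions of different homogeneity are linearly independent functionals of the metric, so that the identity $a_j(\lambda g)=\lambda^{-j}a_j(g)$ really kills the terms of the wrong weight rather than merely constraining their sum.
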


\vspace{0.5cm}

The paper consists in two  more sections. In the first one we describe the Kempf distortion function for irreducible bounded domains,
we prove Lemma \ref{coeffprop} and Theorem \ref{main}.
The second one  is dedicated  to the proof of  Theorem \ref{main2}.

\section{The coefficients of  bounded symmetric domains and the proof of Theorem \ref{main}}

Let $(\Omega, g_B)$ be an irreducible bounded symmetric domain of $\mathds{C}^d$ endowed with its Bergman metric $g_B$.
Recall that $g_B$ is the metric whose associated K\"ahler form is given by $\omega_B=\frac i2 \partial \bar \partial \log K(z, z)$, where $K$ is the reproducing kernel of the Hilbert space of holomorphic function on $\Omega$ which are $L^2$ bounded with respect to the Euclidean measure of $\mathds{C}^d$.

A bounded symmetric domain $(\Omega, g_B)$ is uniquely determined by its rank $r$ and its numerical invariants $(a,b)$, $a, b\geq 0$. In particular, $\{r,a,b\}$  determine the genus $\gamma=(r-1)a+b+2$ and the dimension $d=r+\frac{r(r-1)}{2}a+rb$ of $\Omega$. In \cite[Sec. 5]{me2} M. Engli\v{s} computed the coefficients $b_j$ of $(\Omega,\frac{1}{\gamma}g_B)$ up to the third. In particular the first two read:
\begin{equation}\label{b12}
\begin{cases}
a_1=&\!\!\!\!-\frac12\gamma d,\\
a_2=&\!\!\!\!\frac18 \gamma^2d^2-\frac16\gamma^2d+\frac {1}{24}q,
\end{cases}
\end{equation}
where
$$
q=-\frac{r-1}2da^2+\frac{r-1}rad(d+r)+\frac{2d(d+r)}r.
$$
Notice that the Kempf distortion function of an irreducible bounded symmetric domain $(\Omega,\frac{1}{\gamma}g_B)$  of rank $r$ and numerical invariants $a$, $b$ is a polynomial in $m$ of degree $d$ which reads as  (see e.g. \cite[Sec. 5]{me2}) :
$$
\epsilon_{m}=\prod_{j=1}^r\frac{\Gamma(m-(j-1)\frac{a}{2})}{\Gamma(m-\frac dr -(j-1)\frac{a}{2})}.
$$

\begin{remark}\rm
Notice that in this case  the  TYZ asymptotic expansion (\ref{TYZ}) being a polynomial is finite and all the coefficients $a_j$ associated to the metric $g=\frac{1}{\gamma}g_B$ are constants.
\end{remark}
In the following lemma we prove an inequalty needed in the proof of the main theorem.
\begin{lem}\label{coeffprop}
Let $(\Omega,g=\frac{1}{\gamma}g_B)$ be an irreducible bounded symmetric domain Then the coeffcients $a_1$ and $a_2$ associated to $g$ satisfies the following inequality:
\begin{equation}\label{ineqa1a2}
a_2-\frac12a_1^2<0.
\end{equation}
\end{lem}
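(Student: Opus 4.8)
The plan is to insert the explicit formulas \eqref{b12} into \eqref{ineqa1a2} and reduce the inequality to an elementary positivity statement about the numerical invariants $r,a,b$. First I would observe that $\tfrac12 a_1^2=\tfrac18\gamma^2d^2$ exactly cancels the leading term of $a_2$, so that
\begin{equation*}
a_2-\tfrac12 a_1^2=-\tfrac16\gamma^2 d+\tfrac1{24}q=\frac{d}{24}\Bigl(\frac qd-4\gamma^2\Bigr);
\end{equation*}
since $d>0$, it suffices to prove $q/d<4\gamma^2$.

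The second step is to put $q/d$ into closed form. Using $\gamma=(r-1)a+b+2$ together with $d=r+\tfrac{r(r-1)}{2}a+rb$, one checks the identity $d+r=r\bigl(\gamma-\tfrac{(r-1)a}{2}\bigr)$. Substituting this into
\begin{equation*}
\frac qd=-\frac{r-1}{2}a^2+\frac{(d+r)}{r}\bigl((r-1)a+2\bigr)
\end{equation*}
and using $(r-1)a+2=\gamma-b$, the expression collapses, after expanding and collecting terms, to
\begin{equation*}
\frac qd=\gamma^2-\gamma b-(r-1)a-\frac{r(r-1)}{2}a^2.
\end{equation*}
This algebraic simplification is the computational heart of the lemma; it is routine but needs a little care with the bookkeeping.

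Feeding this back in yields
\begin{equation*}
a_2-\tfrac12 a_1^2=-\frac{d}{24}\Bigl(3\gamma^2+\gamma b+(r-1)a+\frac{r(r-1)}{2}a^2\Bigr).
\end{equation*}
Since $d>0$, $r\geq1$, $a,b\geq0$ and $\gamma=(r-1)a+b+2\geq2$, every summand inside the parenthesis is nonnegative while $3\gamma^2\geq12>0$, so the right-hand side is strictly negative; this is exactly \eqref{ineqa1a2}. I expect the only genuine obstacle to be the derivation of the closed form for $q/d$ — one could instead bound $q/d$ crudely against $4\gamma^2$, but the clean identity makes the sign manifest — after which the conclusion is immediate.
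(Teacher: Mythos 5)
Your proposal is correct and follows essentially the same route as the paper: both cancel the leading term $\tfrac18\gamma^2d^2$, reduce the claim to $q/d<4\gamma^2$, substitute $\gamma=(r-1)a+b+2$ and $d=r+\tfrac{r(r-1)}{2}a+rb$, and finish with an elementary sign check in $r,a,b$ (the paper expands into the inequality $\tfrac12a^2(r-1)(r-2)<4(r-1)^2a^2+4b^2+12+7(r-1)ab+13(r-1)a+14b$, whereas you package the same algebra into the identity $q/d=\gamma^2-\gamma b-(r-1)a-\tfrac{r(r-1)}{2}a^2$). Your closed form, which I have checked, is a slightly cleaner way to make the negativity manifest, but the underlying argument is the same.
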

\begin{proof}
By  (\ref{b12}) wee need   to show that  $\frac {1}{4}q<\gamma^2d$,
namely 
$$
-\frac{r-1}2a^2+\frac{r-1}ra(d+r)+\frac{2(d+r)}r<4\gamma^2.
$$
Substituting $\gamma=(r-1)a+b+2$, $d=r(r-1)\frac{a}2+rb+r$ we get:
$$
\frac12a^2(r-1)(r-2)<4(r-1)^2a^2+4b^2+12+7(r-1)ab+13(r-1)a+14b,
$$
which holds true since $a, b\geq 0$,  $r\geq 1$ and
$\frac12(r-1)(r-2)<4(r-1)^2$.
\end{proof}

\begin{lem}\label{c2b2a2}
Let $(M_1, g_1)$ and $(M_2, g_2)$ be two  K\"ahler manifolds and consider 
their product $(M_1\times M_2, g_1\oplus g_2)$.
Denote by $a_1, a_2$, $b_1, b_2$ and $c_1, c_2$ the coefficients associated to the metrics $g_1$, $g_2$ 
and $g_1\oplus g_2$ respectively.
Then:
\begin{equation}\label{c1g1g2}
c_1=a_1+b_1,
\end{equation}
\begin{equation}\label{c2g1g2}
c_2=a_2+b_2+a_1b_1.
\end{equation}

\end{lem}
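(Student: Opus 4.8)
The plan is to derive everything from the multiplicative behaviour of Kempf's distortion function under K\"ahler products. First I would set up the quantization on the product: if $(L_i,h_i)$ is a geometric quantization of $(M_i,mg_i)$ with Ricci curvature $m\omega_i$ for $i=1,2$, then the external tensor product $L_1\boxtimes L_2$ equipped with $h_1\otimes h_2$ has Ricci curvature $m(\omega_1\oplus\omega_2)$, hence is a geometric quantization of $(M_1\times M_2,m(g_1\oplus g_2))$. Using the elementary identity of volume forms $\frac{(\omega_1\oplus\omega_2)^n}{n!}=\frac{\omega_1^{n_1}}{n_1!}\wedge\frac{\omega_2^{n_2}}{n_2!}$ (where $n_i=\dim_{\mathbb C}M_i$, $n=n_1+n_2$) one checks that $L^2$ holomorphic sections of $L_1\boxtimes L_2$ decompose as the Hilbert tensor product, so $\mathcal{H}_m(M_1\times M_2)=\mathcal{H}_m(M_1)\otimes\mathcal{H}_m(M_2)$. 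Choosing an orthonormal basis of the form $\{s_i\otimes t_j\}$ and plugging it into \eqref{Tmo} then gives the factorization
$$\epsilon_{m(g_1\oplus g_2)}(x_1,x_2)=\epsilon_{mg_1}(x_1)\,\epsilon_{mg_2}(x_2),$$
which I would record and prove as the first step.

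Next, assuming the TYZ expansions $\epsilon_{mg_1}\sim\sum_{p\ge 0}a_pm^{n_1-p}$ and $\epsilon_{mg_2}\sim\sum_{q\ge 0}b_qm^{n_2-q}$ with $a_0=b_0=1$ (these exist in the compact case and, more to the point, in all cases relevant to this paper, where the expansions are even finite), I would multiply the two series, collect equal powers of $m$, and use $n_1+n_2=n$ to obtain $\epsilon_{m(g_1\oplus g_2)}\sim\sum_{\ell\ge 0}c_\ell m^{n-\ell}$ with
$$c_\ell=\sum_{p+q=\ell}a_pb_q.$$
The cases $\ell=1$ and $\ell=2$ read $c_1=a_1+b_1$ and $c_2=a_2+b_2+a_1b_1$, which are exactly \eqref{c1g1g2} and \eqref{c2g1g2} (while $\ell=0$ recovers $c_0=1$).

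Finally, since the lemma is stated for arbitrary K\"ahler manifolds --- where a genuine TYZ expansion need not exist and the $a_j$'s are by definition the universal polynomials in the curvature produced by Lu's recursion --- I would close the gap in one of two equivalent ways. The direct route is to substitute into \eqref{coefflu} the product identities $\rho=\rho_1+\rho_2$, $|\mathrm{Ric}|^2=|\mathrm{Ric}|^2_1+|\mathrm{Ric}|^2_2$, $|R|^2=|R|^2_1+|R|^2_2$, and $\Delta f=\Delta_i f$ for $f$ pulled back from $M_i$; then $c_1=\tfrac12(\rho_1+\rho_2)=a_1+b_1$, and in $c_2$ every term splits additively except $\tfrac{1}{24}\big(3(\rho_1+\rho_2)^2-3\rho_1^2-3\rho_2^2\big)=\tfrac14\rho_1\rho_2=(\tfrac12\rho_1)(\tfrac12\rho_2)=a_1b_1$, giving \eqref{c2g1g2}. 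The conceptual route is to note that $c_\ell-\sum_{p+q=\ell}a_pb_q$ is a universal polynomial in the curvature tensors of $g_1,g_2$ and their covariant derivatives that vanishes on all products of compact K\"ahler manifolds by the factorization above, hence vanishes identically. I expect no analytic obstacle here: the only real work is the bookkeeping of the curvature splitting in the direct route (or making the density/universality argument precise in the conceptual one).
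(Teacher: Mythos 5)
Your proposal is correct, and your ``direct route'' is precisely the paper's own proof: the authors prove Lemma \ref{c2b2a2} by observing that the block-diagonal form of $g_1\oplus g_2$ gives $\Delta\rho=\Delta_1\rho_1+\Delta_2\rho_2$, $|R|^2=|R_1|^2+|R_2|^2$, $|{\rm Ric}|^2=|{\rm Ric}_1|^2+|{\rm Ric}_2|^2$ and $\rho=\rho_1+\rho_2$, and then substituting into Lu's formulas \eqref{coefflu}, with the cross term $a_1b_1=\tfrac14\rho_1\rho_2$ coming from $3(\rho_1+\rho_2)^2$ exactly as you computed. Your other route --- the factorization $\epsilon_{m(g_1\oplus g_2)}=\epsilon_{mg_1}\epsilon_{mg_2}$ followed by multiplying the expansions --- is not how the paper proves this lemma, but it does appear in the paper as separate results (Lemma \ref{prodotto} and Corollary \ref{corprodotto} in Section 3), used there to prove Theorem \ref{main2}; the authors explicitly note in Remark \ref{trugen} that they only \emph{believe}, but do not prove, that the conclusion $c_j=\sum_{\alpha+\beta=j}a_\alpha b_\beta$ holds without assuming existence of the TYZ expansions, which is exactly the gap your ``conceptual route'' (a universality/density argument) would have to fill and which you rightly flag as requiring care. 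Since the lemma only concerns $c_1$ and $c_2$ and the $a_j$'s are defined for arbitrary K\"ahler manifolds via Lu's curvature polynomials, the direct substitution is the argument that actually closes the proof, and yours matches the paper's.
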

\begin{proof}
The expression for $c_1$ follows by the first of  (\ref{coefflu}) and by the fact that the scalar curvature of the
$g_1\oplus g_2$ is the sum of those of $g_1$ and $g_2$. Equation (\ref{c2g1g2}) is more subtle and can be obtained as follows.
For $j=1,2$ denote by $\Delta_j$, $R_j$, ${\rm Ric}_j$ and $\rho_j$ the Laplacian, the Riemannian tensor, the Ricci tensor and the scalar curvature of $g_j$, and by $\Delta$, $R$, ${\rm Ric}$ and $\rho$ those of $g_1\oplus g_2$. Observe that by:
$$
g_1\oplus g_2=\left(\begin{array}{c|c}
g_1&0\\
\hline
0&g_2\end{array}\right),
$$
we get:
$$
(g_1\oplus g_2)^{-1}=\left(\begin{array}{c|c}
g_1^{-1}&0\\
\hline
0&g_2^{-1}\end{array}\right),
$$
from which follows easily:
$$
\Delta \rho=\Delta_1 \rho_1+\Delta_2\rho_2,
$$
$$
|R|^2=\sum_{j,k,l,m,p,q,s,t}g^{\bar jk}g^{\bar lm}g^{\bar pq}g^{\bar st}R_{k\bar l q\bar s}\overline{R_{j\bar m p\bar t}}=|R_1|^2+|R_2|^2,
$$
$$
|{\rm Ric}|^2=\sum_{j,k,l,m}g^{\bar jk}g^{\bar lm}{\rm Ric}_{k\bar l}\overline{{\rm Ric}_{j\bar m}}=|{\rm Ric}_1|^2+|{\rm Ric}_2|^2.
$$
Thus, by the second of \eqref{coefflu} we get:
$$c_2=\Delta_1 \rho_1+\Delta_2\rho_2+\frac{1}{24}\left(|R_1|^2+|R_2|^2-4|{\rm Ric}_1|^2-4|{\rm Ric}_2|^2+3(\rho_1+\rho_2)^2\right),
$$
i.e. $c_2=a_2+b_2+a_1b_1$, as wished.
\end{proof}
\begin{ex}\rm\label{exc2=0}
Notice that there exist LHSS with vanishing $a_2$ which are not flat.
Consider for example $\left(\mathds{C}\times \mathds{C}P^1, g_0\oplus g_{FS}\right)$ and  denote by $c_1, c_2$, $a_1, a_2$, $b_1, b_2$ the coefficients associated to  $g_0\oplus g_{FS}$,  $g_0$ and to $g_{FS}$ respectively. Since $a_0=1$, $a_1=a_2=0$, $b_0=b_1=1$, $b_2=0$ by Lemma \ref{c2b2a2} one gets
$$
c_0=c_1=1,\quad c_2=0.
$$
For a compact example just take $\left(\mathds{T}\times \mathds{C}P^1,g_0\oplus g_{FS}\right)$,
where $(\mathds{T}, g_0)$ is the flat one-dimensional torus.
\end{ex}

\vspace{0.3cm}
In order to prove Theorem \ref{main} we notice that 
to any irreducible bounded symmetric domain $(\Omega,g=\frac{1}{\gamma}g_B)$ one can associate its compact dual 
$(\Omega^*, g^*)$ where $g^*$ is the pull-back of Fubini--Study metric of $\mathds{C}P^N$ via the Borel--Weil embedding $\Omega^*\rightarrow \mathds{C}P^N$. In  the affine chart $\Omega^*\setminus {\rm Cut}_p(\Omega^*)$, where ${\rm Cut}_p(\Omega^*)$ is the cut locus of the point $p\in \Omega^*$ w.r.t. $g^*$, one has:
\begin{equation}\label{duality}
\omega^*=-\frac i{2\gamma}\partial\bar \partial\log K(z,-\bar z).
\end{equation}
(where $\omega_B=\frac i2 \partial \bar \partial \log K(z, z)$).
The reader is referred to \cite{sympdual, LM01} for details and further results.

\begin{proof}[Proof of Theorem \ref{main}]
Since the coefficients $a_1$ and $a_2$ are invariants by  local isometry,  without loss of generality  we can assume that  $(M,g)$  is  simply-connected and so, by the classification theorem for Hermitian symmetric spaces  it can be written as:
$$(\Omega_1^*\times\dots\times \Omega_s^*\times \Omega_{s+1}\times\dots\times \Omega_k\times \mathds{C}^m,\lambda_1g_1^*\oplus\cdots\oplus\lambda_sg_s^*\oplus \lambda_{s+1}g_{s+1}\oplus\lambda_kg_k\oplus g_0),$$
where we denote by $g_0$ the flat metric on $\mathds{C}^m$, $(\Omega_h^*,\lambda_hg_h^*)$, $h=1,\dots, s$ are  irreducible hermitian symmetric spaces of compact type ($(\Omega_h^*,g_h^*)$ is the compact dual of $(\Omega_h,g_h)$, $h=1,\dots, s$),
$(\Omega_l, g_l)$, $l=s+1,\dots, k$, are  irreducible bounded symmetric domains and $\lambda_h$, $\lambda_l$ are  positive constants. Denote by $c_1$ and $c_2$ the coefficients associated to  $g$, by 
$a_{1,h}^*, a_{2,h}^*$ the coefficients associated to  $g_h^*$, $h=1,\dots, s$, and by $a_{1,l}, a_{2,l}$ the coefficients associated to  
$g_l$, $l=s+1,\dots, k$. By a direct computation using (\ref{coefflu}) (or  Corollary \ref{corprodotto} below) one obtains that $a_{1,h}^*=-a_{1, h}$  and $a_{2,h}^*=a_{2,h}$, where  $a_{1, h}$ and $a_{2, h}$ are the coefficients associated to the metric $g_h$, $h=1,\dots, s$.

Then, by Lemma \ref{c2b2a2}, one gets:
$$
c_1=\sum_{h=1}^s\frac{a_{1,h}^*}{\lambda_h}+\sum_{l=s+1}^k\frac{a_{1,l}}{\lambda_l}=-\sum_{h=1}^s\frac{a_{1,h}}{\lambda_h}+\sum_{l=s+1}^k\frac{a_{1,l}}{\lambda_l},
$$
$$
c_2=\sum_{h=1}^s \frac{a_{2,h}^*}{\lambda_h^2}+\sum_{l=s+1}^k \frac{a_{2,l}}{\lambda_l^2}+\sum_{h,  l}\frac{a_{1,h}^*a_{1,l}}{\lambda_h\lambda_l}=\sum_{u=1}^k \frac{a_{2,u}}{\lambda_u^2}+\frac12\left[ c_1^2-\sum_{u=1}^k\frac{a_{1,u}^2}{\lambda_u^2}\right].
$$
Since, by assumption $c_1=0$, one gets:
$$
c_2=\sum_{u=1}^k\frac{1}{\lambda_u^2} \left[a_{2,u}-\frac12a_{1,u}^2\right].
$$
By (\ref{ineqa1a2}) in  Lemma \ref{coeffprop} $a_{2,u}-\frac12a_{1,u}^2<0$ for all $u$.  Thus  $c_2=0$ 
forces $k=0$, i.e. $(M,\omega)=(\mathds{C}^m,g_0)$.
\end{proof}

\section{The proof of Theorem \ref{main2}}\label{herm}
A key ingredient in the proof of Theorem \ref{main2} is the following lemma which provide us with a  recursive formula for the 
computaton of the coefficients
$a_j$ associated to a K\"ahler manifold $(M, g)$ (we refer the reader to \cite{asymsmooth} for details).

\begin{lem}[A. Loi \cite{asymsmooth}]
Let $(M, g)$ be a  K\"ahler manifold and let $a_j(x)$, $j=0, 1,\dots$ be the coefficients associated to $g$ and denote by $a_j(x,y)$ their almost analytic extension in a neighborhood $U$ of the diagonal of $M\times M$.
Then
\begin{equation}\label{recursive}
a_k(x)=c_k+\tilde a_k(x,x)+\sum_{r+j=k, r\geq 1, j\geq 1}C_r(\tilde a_j(x,y))|_{y=x},
\end{equation}
where $a_0\equiv1$ and for all $j=1,2,\dots$:
$$
\tilde a_j(x,y)=\sum_{\alpha=0}^j a_\alpha(x,y) a_{j-\alpha}(y,x);
$$
\begin{equation}\label{cj1}
c_r(x)=C_r(1)(x);
\end{equation}
\begin{equation}\label{cjf}
C_r(f)(x)=\sum_{k=r}^{3r}\frac{1}{k!(k-r)!}L^k(f\det(g_{i\bar j})S^{k-r})|_{y=x};
\end{equation}
where if we denote by $g^{\bar jk}$ the entries of the inverse matrix of the metric $g$, $L^k$ is the operator defined by:
\begin{equation}\label{Lop}
L^k\varphi=\sum_{j_1,\dots,j_k,i_1,\dots, i_k}g^{\bar i_1 j_1}\cdots g^{\bar i_k j_k} \varphi_{j_1\cdots j_k\bar i_1\cdots \bar i_k},
\end{equation}
and
\begin{equation}\label{Sxy}
S_x(y)=- D_x(y)+\sum_{ij=1}^n g_{i\bar j}(y)(y_i-x_i)(\bar y_j-\bar x_j),
\end{equation}
where $D_x(y)$ is the diastasis function centered at $x$. 
\end{lem}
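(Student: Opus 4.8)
The plan is to derive the recursion from the exact self-reproducing property of the weighted reproducing kernel $K_m$ of $\mathcal{H}_m$, matched order by order in $m^{-1}$ by Laplace's method. Work in a trivializing chart with K\"ahler potential $\Phi$, so that the Hermitian metric reads $h_m=e^{-m\Phi}$ and the kernel $K_m(x,\bar y)$ (holomorphic in $x$, antiholomorphic in $y$) satisfies $f(x)=\int_M K_m(x,\bar y)f(y)e^{-m\Phi(y)}\frac{\omega^n}{n!}$ for $f\in\mathcal{H}_m$, with $\epsilon_{mg}(x)=K_m(x,\bar x)e^{-m\Phi(x)}$. Applying the reproducing property to the holomorphic function $y\mapsto K_m(y,\bar x)$ and using the Hermitian symmetry $K_m(y,\bar x)=\overline{K_m(x,\bar y)}$ gives the exact identity
$$\epsilon_{mg}(x)=\int_M K_m(x,\bar y)\,K_m(y,\bar x)\,e^{-m\Phi(x)-m\Phi(y)}\,\frac{\omega^n}{n!}(y).$$
This is the single relation from which everything follows.

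Next I would insert the off-diagonal asymptotics $K_m(x,\bar y)\sim e^{m\Phi(x,\bar y)}\sum_{j\ge0}a_j(x,y)\,m^{n-j}$, whose coefficients are exactly the almost-analytic extensions $a_j(x,y)$ of the diagonal $a_j(x)$. Writing $\frac{\omega^n}{n!}=\det(g_{i\bar j})\,dV$ and using that the exponents combine into the Calabi diastasis, $\Phi(x,\bar y)+\Phi(y,\bar x)-\Phi(x)-\Phi(y)=-D_x(y)$, the product of the two series carries the coefficients $\tilde a_j(x,y)=\sum_\alpha a_\alpha(x,y)a_{j-\alpha}(y,x)$ appearing in the statement, so the identity becomes
$$\epsilon_{mg}(x)=\int_M e^{-mD_x(y)}\Big(\sum_{j\ge0}\tilde a_j(x,y)\,m^{2n-j}\Big)\det(g_{i\bar j})(y)\,dV(y).$$

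The core is a Laplace expansion of this integral about the nondegenerate minimum $y=x$ of the phase $D_x$, where $D_x(x)=0$ and the Hessian is $g(x)$. Here I would factor the phase as $D_x(y)=\sum_{i,j}g_{i\bar j}(y)(y_i-x_i)(\bar y_j-\bar x_j)-S_x(y)$, i.e. peel off the Gaussian weight leaving the correction $S_x$ of \eqref{Sxy}; since this quadratic form and $D_x$ share the Hessian $g(x)$ at $y=x$, the function $S_x$ vanishes there to third order. Expanding $e^{mS_x}=\sum_p\frac{m^p}{p!}S_x^{\,p}$ and applying the complex Gaussian moment formula, whose frozen-Hessian prototype is
$$\int_{\C^n}e^{-m\sum g_{i\bar j}(x)(y_i-x_i)(\bar y_j-\bar x_j)}\,G\,dV=\frac{(\pi/m)^n}{\det(g_{i\bar j})(x)}\sum_{k\ge0}\frac{1}{m^k\,k!}\,L^kG\big|_{y=x},$$
turns the derivative contractions by the inverse metric into the operators $L^k$ of \eqref{Lop}: the measure Jacobian $\det(g_{i\bar j})$ and the powers $S_x^{\,k-r}$ enter inside $L^k$ exactly as in \eqref{cjf}, the weights $\tfrac1{k!}$ and $\tfrac1{(k-r)!}$ come respectively from the Gaussian moments and the $e^{mS_x}$ expansion, and the range $r\le k\le 3r$ is forced by the third-order vanishing of $S_x$ (the $2k$ derivatives of $L^k$ must reach the $3(k-r)$-fold zero of $S_x^{\,k-r}$, i.e. $2k\ge3(k-r)$). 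Tracking powers of $m$ shows that the coefficient of $m^{n-k}$ receives contributions precisely from pairs with $r+j=k$; separating the $j=0$ part (yielding $c_k=C_k(1)$, since $\tilde a_0\equiv1$), the leading $r=0$ evaluation (yielding $\tilde a_k(x,x)$), and the mixed part $\sum_{r+j=k,\,r,j\ge1}C_r(\tilde a_j)|_{y=x}$ reproduces \eqref{recursive}, read as the equation determining $a_k$ from $a_0,\dots,a_{k-1}$ (the term $2a_k$ hidden in $\tilde a_k(x,x)$ being moved to the left-hand side).

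The main obstacle is turning this formal matching into a theorem. Three points demand care. First, one must secure the off-diagonal expansion of $K_m$ and the identification of its coefficients with the almost-analytic extensions of the $a_j(x)$; this is where analyticity, which makes $D_x(y)$ a genuine phase, and the uniqueness of the TYZ coefficients are used. Second, the integral must be localized near the diagonal: since $D_x(y)>0$ is bounded below away from $y=x$, the complementary contribution is $O(m^{-\infty})$, after which one transplants the germ of the integrand to $\C^n$ and controls Taylor remainders uniformly on compacta, precisely the setting of the $C^r$-estimate \eqref{rest}. Third, and most delicate, is the combinatorial closure: the Gaussian weight in \eqref{Sxy} carries the Hessian at the running point $y$ rather than at $x$, and one must verify that this $y$-dependence, together with the Jacobian $\det(g_{i\bar j})(y)$, reassembles with the frozen-Hessian moment formula into the clean operators $L^k$ and the exact factorials of \eqref{cjf}. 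Once the rescaling $y-x=O(m^{-1/2})$ is used to assign each factor its weight in $m$, this is bookkeeping, but it is the step where the particular normalizations, and the overall constant fixed by $a_0\equiv1$, must be checked.
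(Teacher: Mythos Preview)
The paper does not prove this lemma; it is quoted from \cite{asymsmooth} with the explicit disclaimer ``we refer the reader to \cite{asymsmooth} for details,'' so there is no in-paper argument to compare against. Your outline is in fact the standard derivation used in that reference (and in Engli\v{s}'s earlier work \cite{me2}): the self-reproducing identity for $K_m$ yields the squared-kernel integral, the off-diagonal ansatz $K_m(x,\bar y)\sim e^{m\Phi(x,\bar y)}\sum a_j(x,y)m^{n-j}$ turns the exponent into $-mD_x(y)$ and the amplitude into $\sum \tilde a_j m^{2n-j}$, and Laplace's method with the splitting $D_x=\text{(quadratic)}-S_x$ produces the operators $C_r$ with the stated combinatorics.

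Two remarks on the points you flag. First, the identification of the off-diagonal coefficients with the almost-analytic extensions of the diagonal $a_j$ is not a consequence of the reproducing identity itself; it is input, coming from the local Bergman-kernel parametrix (Boutet de Monvel--Sj\"ostrand, or the Berezin-transform analysis in \cite{me2,asymsmooth}). You state it correctly but should be aware it is where most of the analysis hides. Second, your ``third obstacle'' about the Hessian at the running point $y$ versus the frozen point $x$ is exactly right, and in the cited derivation it is handled by absorbing $\det(g_{i\bar j})(y)$ into the amplitude so that the Gaussian carries $g(x)$ while the variable-coefficient correction sits inside $L^k$; your formula \eqref{cjf} already records this, since $\det(g_{i\bar j})$ appears as a factor of the argument of $L^k$ rather than being cancelled against the Gaussian normalization. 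With those two ingredients supplied, your bookkeeping of the $m$-powers and the range $r\le k\le 3r$ is correct.
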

Observe that the diastasis function centered at the origin for bounded symmetric domains $(\Omega,g)$ is given by:
\begin{equation}\label{diastNC}
D_0(z,\bar z)=\frac1\gamma\log(V(\Omega)K(z,z)),
\end{equation}
where $V(\Omega)$ denotes  the total volume of $\Omega$ with respect to the Euclidean measure of the ambient complex Euclidean space (see \cite[Prop. 7]{mathann}). Further, by the discussion above, the diastasis function centered at the origin for the compact dual $(\Omega^*,g_B^*)$ reads:
\begin{equation}\label{diastC}
D_0^*(z,\bar z)=\frac1\gamma\log(V(\Omega)K(z,-z)).
\end{equation}

We are now in the position of proving Theorem \ref{main2}.
\begin{proof}[Proof of Theorem \ref{main2}]
Let $(L, h)$ be a geometric quantization of $(\Omega^*,\omega^*)$, where $\omega^*$ is the integral  K\"ahler form given by 
(\ref{duality}). Then it is not hard to see  that for all positive  integer $m$ Kempf's distortion function  $\epsilon_{m g}$ is defined 
and (by Riemann--Roch theorem)  is a  monic  polynomial in  $m$  of degree $n$, namely:
\begin{equation}\label{epsHSSCT}
\epsilon_{m g}= \sum_{j=0}^n a_j^*m^{n-j},\quad a_0^*=1,
\end{equation}
Hence, as for the case of  bounded symmetric domains,   the coefficients $a_j^*$ are constant.
We start by  proving (\ref{ajaj*}) namely  $a_j=(-1)^ja_j^*$, $j=1, 2\dots$. 

 By \eqref{recursive}, since $a_k$ is constant for all $k$, we get:
$$
a_k=a_k(0)= c_k(0)+{\tilde a}_k+\sum_{r+j=k, r\geq 1, j\geq 1} {\tilde a}_j c_r(0).
$$
 Observe that for $K=k_1+\dots+k_d$ and $J=j_1+\dots+j_d$, one has:
$$
\frac{\partial^{K+J}}{ \partial z_1^{k_1}\dots \partial z^{k_d}\partial \bar z_1^{j_1}\dots \partial \bar z_d^{j_d}} \det g|_0=0,
$$
whenever $K\neq J$, as it follows by Prop. 7 in \cite{mathann} once noticed that since $g_B$ is a K\"ahler--Einstein metric, we have $\det \left(g\right)=\frac1{\gamma^d}e^{D_0(z,\bar z)}$,
where $D_0(z,\bar z)$ is the diastasis function of $(\Omega,g_B)$ given in \eqref{diastNC}. Further it is easy to verify that $S_0(z,\bar z)$, which by definition \eqref{Sxy} reads:
$$
 S_0(z,\bar z)=- D_0(z,\bar z)+\sum_{ij=1}^d g_{i\bar j}(z,\bar z)z_i\bar z_j,
$$
where $d$ is the dimension of $\Omega$, satisfies:
$$
 S_0(0)=0,\qquad \frac{\partial^{K+J}}{ \partial z_1^{k_1}\dots \partial z^{k_d}\partial \bar z_1^{j_1}\dots \partial \bar z_d^{j_d}} S_0(z,\bar z)|_0=0,
$$
whenever $K=J$.
Thus by \eqref{cj1} and \eqref{cjf},  $c_r\neq 0$ iff $k-r=0$ and we get:
$$
c_r(0)=\frac{(-1)^r}{r!} L^r(\det( g)(y))_{|_{y=0}}.
$$
Let $D_0^*(z,\bar z)$ be the diastasis function around the origin of $g^*$ given by \eqref{diastC} and denote by $S^*$ and $c^*_r$ the operator in \eqref{Sxy} and the coefficient \eqref{cj1} associated to $g^*$.
By \eqref{duality} one gets:
$$
S_0^*(z,\bar z)=-D_0(z,-\bar z)-\sum_{ij=1}^d g_{i\bar j}(z,-\bar z)z_i\bar z_j,
$$
and thus also $S^*_0$ satisfies:
$$
 S^*_0(0)=0,\qquad \frac{\partial^{K+J}}{ \partial z_1^{k_1}\dots \partial z^{k_d}\partial \bar z_1^{j_1}\dots \partial \bar z_d^{j_d}} S^*_0(z,\bar z)|_0=0,
$$
whenever $K=J$. Further, it follows by the Einstein equation that:
$$
 \det   g^*(z,\bar z)=e^{-D_0^*(z,\bar z)},
 $$
thus, by \eqref{duality} we have:
\begin{equation}\label{dualdet}
\det \left(g\right)(z,\bar z)=\det(g^*)(z,-\bar z),
\end{equation}
and we get:
$$
\frac{\partial^{K+J}\det g^*(z,\bar z)}{ \partial z_1^{k_1}\dots \partial z^{k_d}\partial \bar z_1^{j_1}\dots \partial \bar z_d^{j_d}} |_0=(-1)^J\frac{\partial^{K+J}\det g(z,-\bar z)}{ \partial z_1^{k_1}\dots \partial z^{k_d}\partial (-\bar z_1)^{j_1}\dots \partial (-\bar z_d)^{j_d}} |_0=0,
$$
whenever $K\neq J$. Thus, also $c^*_0$ satisfies:
$$
c^*_r(0)=\frac{(-1)^r}{r!} L_*^r(\det( g)(y))_{|_{y=0}},
$$
where $L_*$ denotes the operator in \eqref{Lop} for $g^*$, and by noticing that for $K=J=r$ from \eqref{dualdet} follows:
\begin{equation}
\begin{split}
\frac{\partial^{2r}\det  g(z,\bar z)}{\partial z_1^{k_1}\dots \partial z^{k_d}\partial \bar z_1^{j_1}\dots \partial \bar z_d^{j_d}}=&(-1)^{r}\frac{\partial^{2r}\det  g^*(z,-\bar z)}{\partial z_1^{k_1}\dots \partial z^{k_d}\partial (-\bar z_1)^{j_1}\dots \partial (-\bar z_d)^{j_d}},
\end{split}\nonumber
\end{equation}
we get:
$$
c_r(0)=(-1)^r  c^*_r(0).
$$
Thus by inductive hypothesis:
$$
a_k=(-1)^k  c^*_k(0)+{\tilde a}_k+\sum_{r+j=k, r\geq 1, j\geq 1}(-1)^{r+j} {\tilde a}_j^* c^*_r(0),
$$
implies:
$$
-a_k=(-1)^k  c^*_k(0)+\sum_{\alpha, \beta}(-1)^{k}a_\alpha^* a_\beta^*+\sum_{r+j=k, r\geq 1, j\geq 1}(-1)^{r+j} {\tilde a}_j^*  c^*_r(0).
$$
Therefore formula  (\ref{ajaj*}), follows by
$$
 -a_k^*= c^*_k(0)+\sum_{\alpha, \beta}a_\alpha^* a_\beta^*+\sum_{r+j=k, r\geq 1, j\geq 1} {\tilde a}_j^*  c^*_r(0).
$$
The last part of the theorem, namely the vanishing of the odd coefficients associated to the metric $g\oplus g^*$, is a consequence 
of $a_j=(-1)^ja_j^*$ and of the following lemma and its  corollary.
\end{proof}

\begin{lem}\label{prodotto}
Let $(M_1,g_1)$, $(M_2, g_2)$ be two  K\"ahler manifolds with $\omega_1$ and $\omega_2$ integral. 
Then  the Kempf distortion function $\epsilon_{1,2}$  of $(M_1\times M_2,\omega_1\oplus \omega_2)$
is given by:
$$
\epsilon_{1,2}(x, y)=\epsilon_1(x)\epsilon_2(y),
$$
where $\epsilon_1$ (resp. $\epsilon_2$) is the Kempf distortion function associated to $(M_1,g_1)$ (resp.  $(M_2,g_2))$.
\end{lem}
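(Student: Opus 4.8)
The plan is to build an explicit quantization of $(M_1\times M_2,\omega_1\oplus\omega_2)$ out of quantizations of the two factors, to observe that both the Liouville volume form and the $L^2$ pairing split as products, and to conclude that an orthonormal basis of the Hilbert space of the product is obtained by multiplying orthonormal bases of the two factors; substituting such a basis into the definition \eqref{Tmo} of Kempf's distortion function then yields the claim.

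First I would pick holomorphic line bundles $L_i\f M_i$ with $c_1(L_i)=[\omega_i]$ and Hermitian metrics $h_{i,m}$ on $L_i^{m}$ with ${\rm Ric}(h_{i,m})=m\omega_i$, $i=1,2$, which exist because $\omega_1,\omega_2$ are integral, and let $\pi_i\colon M_1\times M_2\f M_i$ be the projections. Setting $L=\pi_1^*L_1\otimes\pi_2^*L_2$ and $h_m=\pi_1^*h_{1,m}\otimes\pi_2^*h_{2,m}$, one has $c_1(L^m)=m\pi_1^*[\omega_1]+m\pi_2^*[\omega_2]=m[\omega_1\oplus\omega_2]$, and evaluating \eqref{rich} on a product $\pi_1^*\sigma_1\otimes\pi_2^*\sigma_2$ of local trivializing sections gives ${\rm Ric}(h_m)=m(\omega_1\oplus\omega_2)$, so $(L^m,h_m)$ is a geometric quantization of $(M_1\times M_2,m(\omega_1\oplus\omega_2))$. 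Moreover, with $n_i=\dim_\C M_i$ and $n=n_1+n_2$, in the binomial expansion of $(\pi_1^*\omega_1+\pi_2^*\omega_2)^n$ the only surviving term is $\binom{n}{n_1}(\pi_1^*\omega_1)^{n_1}\wedge(\pi_2^*\omega_2)^{n_2}$ (every other summand contains some $(\pi_i^*\omega_i)^{n_i+1}=0$), so
\[
\frac{(\omega_1\oplus\omega_2)^{n}}{n!}=\pi_1^*\frac{\omega_1^{n_1}}{n_1!}\wedge\pi_2^*\frac{\omega_2^{n_2}}{n_2!}.
\]

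Because $h_m$ is an external tensor product and the volume form factorizes, Fubini's theorem gives, for decomposable holomorphic sections,
\[
\big\langle(\pi_1^*s)\otimes(\pi_2^*t),\,(\pi_1^*s')\otimes(\pi_2^*t')\big\rangle_m=\langle s,s'\rangle_m^{(1)}\,\langle t,t'\rangle_m^{(2)},
\]
so $s\otimes t\mapsto(\pi_1^*s)\otimes(\pi_2^*t)$ extends to an isometric embedding of the completed tensor product $\mathcal H_m(M_1)\,\widehat{\otimes}\,\mathcal H_m(M_2)$ into $\mathcal H_m(M_1\times M_2)$. That this embedding is surjective --- equivalently, that every $L^2$ holomorphic section of $L^{m}$ is a Hilbert-space limit of decomposable ones --- is the one point that is not a purely formal manipulation: in the compact case it is the K\"unneth formula $H^0(\pi_1^*L_1^m\otimes\pi_2^*L_2^m)=H^0(L_1^m)\otimes H^0(L_2^m)$, while in general one expands $F\in\mathcal H_m(M_1\times M_2)$ along an orthonormal basis $\{s_j\}$ of $\mathcal H_m(M_1)$ in the first variable, $F=\sum_j(\pi_1^*s_j)\otimes(\pi_2^*F_j)$ with $F_j$ holomorphic on $M_2$, and checks via Fubini's theorem that $\sum_j\|F_j\|^2=\|F\|^2<\infty$ (the norms being those of $\mathcal H_m(M_2)$ and $\mathcal H_m(M_1\times M_2)$), so that the $F_j$ lie in $\mathcal H_m(M_2)$ and the partial sums converge to $F$. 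Hence, if $\{s_j\}$ and $\{t_k\}$ are orthonormal bases of $\mathcal H_m(M_1)$ and $\mathcal H_m(M_2)$, then $\{(\pi_1^*s_j)\otimes(\pi_2^*t_k)\}$ is an orthonormal basis of $\mathcal H_m(M_1\times M_2)$.

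Finally I would substitute this basis into \eqref{Tmo}: since $h_m\big(((\pi_1^*s_j)\otimes(\pi_2^*t_k))(x,y),((\pi_1^*s_j)\otimes(\pi_2^*t_k))(x,y)\big)=h_{1,m}(s_j(x),s_j(x))\,h_{2,m}(t_k(y),t_k(y))$, the double sum factors and
\[
\epsilon_{1,2}(x,y)=\Big(\sum_j h_{1,m}(s_j(x),s_j(x))\Big)\Big(\sum_k h_{2,m}(t_k(y),t_k(y))\Big)=\epsilon_1(x)\,\epsilon_2(y).
\]
Equivalently, the whole argument can be phrased in terms of reproducing kernels, using that the reproducing kernel of a completed tensor product of Hilbert spaces of holomorphic sections is the external tensor product of the reproducing kernels and restricting to the diagonal of $(M_1\times M_2)\times(M_1\times M_2)$. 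The main --- and essentially only --- obstacle is the surjectivity/density statement for $\mathcal H_m(M_1)\,\widehat{\otimes}\,\mathcal H_m(M_2)$ in the noncompact setting; it is classical for weighted Bergman spaces, and in the cases actually needed here (bounded symmetric domains and their compact duals) it can also be read off from the explicit descriptions of the Hilbert spaces involved.
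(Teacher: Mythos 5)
Your proof follows essentially the same route as the paper's: form the external tensor product quantization, observe that the Liouville form and the $L^2$-pairing factorize, use Fubini to get orthonormality of the product sections, and substitute the resulting basis into the definition of $\epsilon_{mg}$. The only difference is that you explicitly address the completeness of the product basis $\{(\pi_1^*s_j)\otimes(\pi_2^*t_k)\}$ in $\mathcal{H}_m(M_1\times M_2)$ --- a point the paper's proof passes over in silence after verifying orthonormality --- and your sketch of that step is sound.
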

\begin{proof}
For $\alpha=1,2$ let $(L_\alpha, h_\alpha)$ be the Hermitian line bundle over $M_\alpha$ such that ${\rm Ric} (h_\alpha)=\omega_\alpha$ (cfr. (\ref{rich}) in the introduction) and let $(L_{1, 2}, h_{1, 2})$ be the  holomorphic hermitian  line bundle over $M_1\times M_2$ such that ${\rm Ric} (h_{1, 2})=\omega_1\oplus \omega_2$. 
Let 
$$
\mathcal{H}_\alpha=\left\{s\in H^0(L_\alpha)\  |\ \int_{M_\alpha} h_\alpha(s,s)\frac{\omega_\alpha^{n_\alpha}}{n_\alpha!}<\infty\right\},
$$
where $n_\alpha$ is the complex dimension of $M_\alpha$ (if $M_\alpha$ is compact then $\mathcal{H}_\alpha\equiv  H^0(L_\alpha)$). Let $\{s^1_j\}$ (resp. $\{s^2_k\}$) be a orthonormal basis for $\mathcal{H}_1$ (resp. $\mathcal{H}_2$) with respect to the $L^2$-product
induced by $h_1$ (resp. $h_2$).
Fix a  local trivialization  $\sigma_{\alpha}:U_\alpha\rightarrow L_\alpha$ of $L_\alpha$ ($\alpha=1, 2$)
on an open and dense subset $U_\alpha\subset M_\alpha$ and 
let $\sigma :U_1\times U_2\rightarrow L_{1, 2}$ be the local trivialization of $L_{1,2}$
given by $\sigma_{1, 2} (x, y)=\sigma_1 (x)\sigma_2 (y)$.
Let $s_{j, k}$ be the global  holomorphic sections of $L_{1, 2}$ 
which in the trivialization $\sigma_{1, 2}$ are given by: 
$s_{j, k}(x, y)=f_j^1(x)f_k^2(y)\sigma_{1, 2}(x, y)$,
where $s_j^1(x)=f_j^1(x)\sigma_1(x)$
and $s_k^2(y)=f_k^2(y)\sigma_2(y)$.
 
We claim that  $s_{j, k}$ is an orthonormal basis for the Hilbert  space:
$$
\mathcal{H}_{1, 2}=\left\{s\in H^0(L_{1,2}) \  |\ \int_{M_1\times M_2} h_{1, 2}(s,s)\frac{(\omega_1\oplus\omega_2)^{n_1+n_2}}{(n_1+n_2)!}<\infty\right\}$$
$$=\left\{s\in H^0(L_{1,2}) \  |\ \int_{M_1\times M_2} h_{1, 2}(s,s)\frac{\omega_1^{n_1}\wedge\omega_2^{n_2}}{n_1!n_2!}<\infty\right\}.
$$
Indeed, in the the trivialization $\sigma_\alpha$ and $\sigma_{1, 2}$ the Hermitian product $h_{\alpha}$
and $h_{1, 2}$ are given respectively by $h_\alpha (\sigma_\alpha (x),\sigma_\alpha (x) ) =e^{-\Phi_\alpha (x)}$
and $h_{1, 2}(x, y)=e^{-(\Phi_1 (x)+\Phi_2 (y))}$,
where $\Phi_\alpha :U_\alpha\rightarrow \R$, $\alpha =1, 2$, is a   K\"ahler potential
for $\omega_{\alpha}$ ($\omega_\alpha =\frac{i}{2}\partial\bar\partial\log \Phi_\alpha$).
Thus 
$$
\int_{M_1\times M_2}h_{1, 2}(s_{j, k}, s_{l, m} )\frac{\omega_1^{n_1}\wedge\omega_2^{n_2}}{n_1!n_2!}=\int_{M_1\times M_2}e^{-(\Phi_1 +\Phi_2)}f_j^1f^2_k\overline{f_l^1f^2_m}\frac{\omega_1^{n_1}\wedge\omega_2^{n_2}}{n_1!n_2!}$$
$$
=\int_{M_1}h_1(s_j^1, s_l^1)\frac{\omega_1^{n_1}}{n_1!}\int_{M_2}h_2(s_k^2, s_m^2)\frac{\omega_2^{n_2}}{n_2!}=\delta_{jl}\delta_{km},
$$
and the claim is proved.
Hence
$$\epsilon_{1, 2}=\sum_{j, k}h_{1, 2}(s_{j, k}, s_{j, k})=\sum_jh_1(s_j^1, s_j^1)\sum_kh_2(s_k^2, s_k^2)=\epsilon_1\epsilon_2.$$
\end{proof}

\begin{cor}\label{corprodotto}
Let $(M_1,g_1)$, $(M_2, g_2)$ be as in the  lemma. Assume that for $m$ sufficiently large 
there exist   the TYZ expansions 
$$\epsilon_{1, m}(x)\sim \sum_{j=0}^\infty a_j(x)m^{n-j}, \quad   \epsilon_{2, m}(y)\sim \sum_{j=0}^\infty b_j(y)m^{n-j},$$ 
$$\epsilon_{1, 2, m}(x, y)\sim \sum_{j=0}^\infty c_j(x, y)m^{n-j},$$
 of the Kempf distortion functions $\epsilon_{1, m}(x)$, $\epsilon_{2, m}(y)$ and $\epsilon_{1, 2, m}(x, y)$
of $(M_1, g_1)$, $(M_2, g_2)$ and $(M_1\times M_2, g_1\oplus g_2)$ respectively.
Then  
$$
c_j(x, y)=\sum_{\alpha+\beta=j}a_\alpha (x) b_\beta (y).
$$
\end{cor}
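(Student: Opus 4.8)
The plan is to deduce the corollary from Lemma~\ref{prodotto} together with the uniqueness of the coefficients of an asymptotic expansion; no new geometric input is needed. First I would apply Lemma~\ref{prodotto}, which gives the pointwise identity
$$
\epsilon_{1,2,m}(x,y)=\epsilon_{1,m}(x)\,\epsilon_{2,m}(y)
$$
for every positive integer $m$. Thus the statement reduces to the elementary fact that, on compact subsets, the asymptotic expansion of a product of two functions admitting TYZ expansions is the Cauchy product of the two expansions.

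To carry this out, fix compact sets $H_1\subset M_1$, $H_2\subset M_2$, a nonnegative integer $k$, and set $n_i=\dim_{\C}M_i$, so that $M_1\times M_2$ has complex dimension $n_1+n_2$. By \eqref{rest} applied to each factor I would write, for $i=1,2$, a splitting $\epsilon_{i,m}=P_{i,k}+R_{i,k}$, where
$$
P_{1,k}(x)=\sum_{\alpha=0}^{k}a_\alpha(x)m^{n_1-\alpha},\qquad P_{2,k}(y)=\sum_{\beta=0}^{k}b_\beta(y)m^{n_2-\beta},
$$
and $\|R_{1,k}\|_{C^r(H_1)}\le C\,m^{n_1-k-1}$, $\|R_{2,k}\|_{C^r(H_2)}\le C\,m^{n_2-k-1}$. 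Expanding $\epsilon_{1,m}\epsilon_{2,m}=P_{1,k}P_{2,k}+P_{1,k}R_{2,k}+R_{1,k}P_{2,k}+R_{1,k}R_{2,k}$ and using that $\|P_{i,k}\|_{C^r}=O(m^{n_i})$ together with the Leibniz rule, the last three summands are each $O(m^{n_1+n_2-k-1})$ in the $C^r$ norm on $H_1\times H_2$, while
$$
P_{1,k}P_{2,k}=\sum_{j=0}^{k}\Big(\sum_{\alpha+\beta=j}a_\alpha(x)b_\beta(y)\Big)m^{n_1+n_2-j}+O(m^{n_1+n_2-k-1}),
$$
the remainder collecting the products $a_\alpha(x)b_\beta(y)\,m^{n_1+n_2-\alpha-\beta}$ with $\alpha+\beta>k$, which are bounded on compacta. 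Hence $\epsilon_{1,2,m}$ admits on $H_1\times H_2$ an asymptotic expansion whose coefficient of $m^{n_1+n_2-j}$ is $\sum_{\alpha+\beta=j}a_\alpha(x)b_\beta(y)$.

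Finally, since by hypothesis $\epsilon_{1,2,m}(x,y)\sim\sum_j c_j(x,y)\,m^{n_1+n_2-j}$ as well, the uniqueness of the coefficients of an asymptotic expansion forces $c_j(x,y)=\sum_{\alpha+\beta=j}a_\alpha(x)b_\beta(y)$, which is the claim. The only point that requires a little care is the bookkeeping of the remainders in the $C^r$ topology on compacta, but this is the standard behaviour of products of asymptotic expansions, so no genuine obstacle arises.
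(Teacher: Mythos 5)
Your proposal is correct and follows exactly the route the paper intends: the corollary is stated without a separate proof precisely because it is the Cauchy-product consequence of Lemma \ref{prodotto} applied to $m(\omega_1\oplus\omega_2)$, combined with uniqueness of asymptotic coefficients. Your careful bookkeeping of the remainders in the $C^r$ norm on compacta just makes explicit what the paper leaves implicit.
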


\begin{remark}\label{trugen}\rm
We believe that the conclusion of the  previous corollary holds true for the product of two K\"ahler manifolds  without any assumption  on the existence of a TYZ expansion.
\end{remark}

\begin{remark}\rm
It is worth pointing out that Kempf's  function of a LHSS does not always exist. For instance, let $(\CP^1\times \CP^1, \sqrt{2}g_{FS}\oplus g_{FS})$. Then the corresponding  K\"ahler form 
$\lambda(\sqrt2\omega_{FS}\oplus \omega_{FS})$  is not integral for any $\lambda\in\mathds{R}$. The integrality of the K\"ahler form is a necessary condition for Kempf's distortion function to exist, although it is not sufficient, as one can see for example when $(\Omega,\beta g_B)$ is a bounded symmetric domain endowed with its Bergman metric $g_B$, since $\beta g_B$ is balanced  iff $\beta>\frac{\gamma-1}{\gamma}$ (see \cite[Th. 1]{balancedCH}).

Notice also that when the Kempf's distortion function does exist, the rest in \eqref{rest} is zero (i.e. the Kempf's distortion function is a polynomial in $m$) if and only if the LHSS $M$ is simply connected. Indeed, when Kempf's distortion function is a polynomial in $m$, since its coefficients are constant it also is and in particular there exists an isometric and holomorphic immersion $f\!: M\rightarrow\mathds{C}P^N$ for some $N\leq \infty$ (see \cite{balancedCH} for details). Consider now the universal covering $\pi\!: \tilde M\rightarrow M$. Since $\tilde M$ is a Hermitian symmetric space, there exists an {\em injective}, isometric and holomorphic immersion $F\!:\tilde M\rightarrow\mathds{C}P^N$ into $\mathds{C}P^N$ for some $N\leq \infty$ (see \cite[Lemma 2.1]{loi06} for the injectivity of  $F$). On the other hand, the composition $f\circ \pi$, is also an isometric and holomorphic K\"ahler immersion of $\tilde M$ into $\mathds{C}P^N$, which is {\em not injective} unless $M$ is simply connected. A contradiction then  follows from Calabi rigidity's Theorem \cite{calabi}. Viceversa, if $M$ is simply connected and noncompact, then it is a polynomial (see e.g. \cite[Ex. 2.14 p. 431]{englisBerezin} and references therein), while if it is simply connected and compact, then Kempf's distortion is constant and by Riemann--Roch theorem it is a polynomial.
\end{remark}


\begin{thebibliography}{99}

\bibitem{abreuSFK} M. Abreu,
{\em Scalar-flat K\"{a}hler metrics on non-compact symplectic toric $4$-manifolds},
Ann. of Glob. Anal. and Geom., Vol. 41 (2012), 209-239.



\bibitem{arlquant} C. Arezzo, A. Loi,
{\em Quantization of K\"{a}hler
manifolds and the asymptotic
expansion of Tian--Yau--Zelditch},
J. Geom. Phys. 47  (2003), 87-99.

\bibitem{arezzoloi}
C. Arezzo, A. Loi,
{\em Moment maps, scalar curvature and quantization of K\"ahler manifolds},
Comm. Math.  Phys. 243 (2004), 543-559.


\bibitem{LoiArezzo} C. Arezzo, A. Loi, F. Zuddas, \emph{Szeg\"o kernel, regular quantizations and spherical CR-structures}, 
Math. Z.  (2013) 275, 1207-1216.




\bibitem{cgr1} M. Cahen, S. Gutt, J. H. Rawnsley,
{\em Quantization of K\"{a}hler manifolds I: Geometric
interpretation of Berezin's quantization}, JGP. 7 (1990), 45-62.

%
%

\bibitem{calabi} E. Calabi,
{\em Isometric Imbeddings of Complex Manifolds},
Ann. Math. 58 (1953), 1-23.





\bibitem{donaldson} S. Donaldson, {\em Scalar Curvature and Projective Embeddings, I}, J. Diff. Geometry 59 (2001),  479-522.

\bibitem{do2} S. Donaldson,
{\em Scalar Curvature and Projective Embeddings, II},
Q. J. Math. 56 (2005),  345--356.

\bibitem{englisBerezin} M. Engli\v{s}, {\em Berezin quantization and reproducing kernels on complex domains},
Trans. Amer. Math. Soc. 348 (1996), no. 2, 411--479. 

\bibitem{englis2} M. Engli\v{s}, {\em A Forelli-Rudin construction and asymptotics of weighted Bergman kernels}, J. Funct. Anal. 177 (2000), no. 2, 257--281.

\bibitem{me2} M. Engli\v{s},
{\em The asymptotics of a Laplace integral on a K\"{a}hler manifold},
J. Reine Angew. Math. 528 (2000), 1-39.


\bibitem{engliszhang} M. Engli\v{s}, G. Zhang, {\em Ramadanov conjecture and line bundles over compact Hermitian symmetric spaces},  Math. Z., vol. 264, no. 4 (2010), 901-912.

\bibitem{fengtu} Z. Feng, Z. Tu, {\em On canonical metrics on Cartan-Hartogs domains}, Math. Zeit. 278 (2014), Issue 1-2, 301--320.



\bibitem{ke} G. R. Kempf,
{\em Metric on invertible sheaves on abelian varieties},
Topics in algebraic geometry (Guanajuato)  (1989).



\bibitem{ji} S. Ji,
{\em Inequality for distortion function of invertible
sheaves on Abelian varieties},
Duke Math. J. 58 (1989),  657-667.


\bibitem{lebrunSFK} C. LeBrun, 
{\em Scalar flat K\"ahler metrics on blown-up ruled surfaces },
J. reine angew. Math. 420 (1991), 161-177.

\bibitem{loianal} A. Loi,
{\em The Tian--Yau--Zelditch asymptotic expansion
for real analytic K\"{a}hler metrics},
Int. J.  of Geom. Methods
Mod. Phys. 1 (2004), 253-263.



\bibitem{asymsmooth} A. Loi, A Laplace integral, the T-Y-Z expansion and Berezin's transform on a K\"ahler manifold, {\em Int. J. Geom. Meth. in Mod. Ph.} {\bf 2} (2005), 359--371. 


\bibitem{loi06} A. Loi, {\em Calabi's diastasis function for Hermitian symmetric spaces}, Diff. Geom. Appl. 24 (2006), 311-319.

\bibitem{commtodor} A. Loi, T. Gramchev,
{\em TYZ expansion for the Kepler manifold},
Comm. Math.  Phys. 289, (2009), 825-840.



\bibitem{mathann} A. Loi, M. Zedda, \emph{K\"ahler--Einstein  submanifolds of   the infinite dimensional  projective space}, Math. Ann. 350 (2011), no. 1, 145--154. .


\bibitem{balancedCH} A. Loi, M. Zedda, \emph{Balanced metrics on Cartan and Cartan--Hartogs domains}, 
 Math. Z. 270 (2012), no. 3-4, 1077-1087.

\bibitem{LoiZeddaTaub}A. Loi, M. Zedda, F. Zuddas \emph {Some remarks on the K\"ahler geometry of the Taub-NUT metrics }, Ann. of Glob. Anal. and Geom., Vol. 41 n.4 (2012), 515--533.

\bibitem{lu} Z. Lu,
{\em On the lower terms of the asymptotic expansion of Tian--Yau--Zelditch},
Amer. J. Math. 122 (2000),  235-273.

\bibitem{LuTian} Z. Lu, G. Tian, \emph{The log term of Szeg\"o Kernel}, Duke Math. J. 125, N 2 (2004), 351-387.

\bibitem{MaMarinescu} X. Ma, G. Marinescu, \emph{Holomorphic morse inequalities and Bergman kernels}, Progress in Mathematics, Birkh\"auser, Basel, (2007).


\bibitem{rawnsley} J. Rawnsley, \emph{Coherent states and K\"ahler manifolds}, Quart. J. Math. Oxford (2), n. 28 (1977), 403--415.

\bibitem{simanca} S. R. Simanca,
{\em K\"ahler metrics of constant scalar curvature on bundles over
${\C}P_{n-1}$}, Math. Ann. 291, 239-246 (1991).

\bibitem{xu1} H. Xu,
{\em A closed formula for the asymptotic expansion of the Bergman kernel},
Comm. Math. Phys. 314 (2012), no. 3, 555Ð585. 



\bibitem{zedda} M. Zedda, {\em Canonical metrics on Cartan-Hartogs domains},
Int. J. Geom. Methods Mod. Phys. 9 (2012), no. 1, 1250011, 13 pp.


\bibitem{ze} S. Zelditch,
{\em Szeg\"{o} Kernels and a Theorem of Tian},
Internat. Math. Res. Notices  6 (1998), 317--331.
\bibitem{zha} S. Zhang,
{\em Heights and reductions of semi-stable varieties},
Comp. Math. 104 (1996), 77-105.

\end{thebibliography}
\end{document}